\tikzset{->-/.style={decoration={
  markings,
  mark=at position #1 with {\arrow{>}}},postaction={decorate}}}
\theoremstyle{definition}
\newtheorem{theorem}{Theorem}[section]
\newtheorem{proposition}[theorem]{Proposition}
\newtheorem{definition}[theorem]{Definition}
\newtheorem{lemma}[theorem]{Lemma}
\numberwithin{equation}{section}
\newtheorem{example}[theorem]{Example}
\newtheorem{note}[theorem]{Note}
\newcommand{\ie}{\textit{i.e.}\xspace}
\DeclareRobustCommand{\Section}{Section\xspace}
\DeclareRobustCommand{\Figure}{Figure\xspace}
\DeclareRobustCommand{\Equation}{equation\xspace}
\DeclareRobustCommand{\Lemma}{Lemma\xspace}
\DeclareRobustCommand{\Proposition}{Proposition\xspace}
\DeclareRobustCommand{\Note}{Note\xspace}
\DeclareRobustCommand{\Example}{Example\xspace}
\DeclareRobustCommand{\Definition}{Definition\xspace}
\DeclareRobustCommand{\SectionRef}[1]{\Section~\ref{#1}}
\DeclareRobustCommand{\FigureRef}[1]{\Figure~\ref{#1}}
\DeclareRobustCommand{\EquationRef}[1]{\Equation~\eqref{#1}}
\DeclareRobustCommand{\LemmaRef}[1]{\Lemma~\ref{#1}}
\DeclareRobustCommand{\PropositionRef}[1]{\Proposition~\ref{#1}}
\DeclareRobustCommand{\NoteRef}[1]{\Note~\ref{#1}}
\DeclareRobustCommand{\ExampleRef}[1]{\Example~\ref{#1}}
\DeclareRobustCommand{\DefinitionRef}[1]{\Definition~\ref{#1}}
\def\saveenum{\xdef\@savedenum{\the\c@enumi\relax}}
\def\resetenum{\global\c@enumi\@savedenum}
\newcolumntype{Y}{>{\centering\arraybackslash}X}
\newcounter{VarNumber}
\chardef\lastvar=68
\chardef\var=\value{VarNumber}
\newcommand{\scriptvar}{\mathcal{\var}}
\newcommand{\nextvar}[1][\lastvar]{%
\stepcounter{VarNumber}
\chardef#1=\var
\chardef\lastvar=\var
\chardef\var=\value{VarNumber}
}
\newcommand{\coinc}[2][]{
\cong_{#2}^\text{#1}
}
\newcommand{\weqv}[2][]{
\overset{\makebox(0,0){\scalebox{0.5}{\text{W}}}}{\cong}_{#2}^\text{#1}}
\newcommand{\et}[0]{
\text{ and }
}
\author{Murray Tannock \and Henning Ulfarsson}
\title{Equivalence classes of mesh patterns with a dominating pattern\thanks{Research partially supported by grant 141761-051 from the Icelandic Research Fund.}}
\affiliation{School of Computer Science, Reykjavik University, Reykjavik, Iceland}
\keywords{permutation, pattern, mesh pattern, pattern coincidence}
\begin{document}
\maketitle
\begin{abstract}
    Two mesh patterns are coincident if they are avoided by the same set of
    permutations, and are Wilf-equivalent if they have the same number of
    avoiders of each length. We provide sufficient conditions for coincidence of
    mesh patterns, when only permutations also avoiding a longer classical
    pattern are considered. Using these conditions we completely classify
    coincidences between families containing a mesh pattern of length 2 and a
    classical pattern of length 3. Furthermore, we completely Wilf-classify mesh
    patterns of length 2 inside the class of 231-avoiding permutations.

\end{abstract}

\section{Introduction}
The study of permutation patterns began as a result of Knuth's statements on
stack sorting in \emph{The Art of Computer Programming}~\cite[p.~243,
Ex.~5,6]{Knuth:1997:ACP:260999}. The original concept---a subsequence of symbols
having a particular relative order, now known as classical patterns---has been
expanded to a variety of definitions. \textcite{babstein2000} considered
\emph{vincular} patterns (also known as \emph{generalised} or \emph{dashed}
patterns) where two adjacent entries in the pattern can be required to be
adjacent in the permutation. \textcite{MR2652101} look at classes of patterns
where entries can also be required to be consecutive in value, these are called
\emph{bivincular} patterns. \emph{Bruhat-restricted} patterns were studied by
\textcite{MR2264071} to establish necessary conditions for a Schubert variety to
be Gorenstein. These definitions are subsumed under the definition of \emph{mesh
patterns}, introduced by \textcite{journals/combinatorics/BrandenC11} to capture
explicit expansions for certain permutation statistics.

When considering permutation patterns some of the main questions posed relate to
how and when a pattern is avoided by, or contained in, an arbitrary set of
permutations. Two patterns \(\pi\) and \(\sigma\) are \emph{Wilf-equivalent} if
the number of permutations that avoid \(\pi\) of length \(n\) is equal to the
number of permutations that avoid \(\sigma\) of length \(n\). A stronger
equivalence condition is that of \emph{coincidence}, where the set of
permutations avoiding \(\pi\) is exactly equal to the set of permutations
avoiding \(\sigma\). Avoiding pairs of patterns of the same length with certain
properties has been studied, \textcite{MR2178749} considered avoiding a pair of
vincular patterns of length 3. \textcite{2015arXiv151203226B} study avoiding a
vincular and a covincular pattern simultaneously in order to achieve several
counting results. However, little work has been done on avoiding a mesh pattern
and a classical pattern simultaneously.

In this work we aim to establish some ground in this field by computing
coincidences and Wilf-classes and calculating some of the enumerations of
avoiders of a mesh pattern of length 2 and a classical pattern of length 3. We
begin by establishing coincidences between mesh patterns of length 2 while
avoiding a dominating pattern by computational methods, which are then used to
establish three ``Dominating Pattern Rules'' as well as some special cases that
can be used to calculate coincidences.

 We then use these coincidence classes to calculate Wilf-equivalence classes
 showing some of the methods used.

\section{Mesh patterns}
A \emph{permutation} is a bijection from the set \(\nrange{n} =
\setrange{1}{n}\) to itself. The set of all such bijections is denoted
\(\mathfrak{S}_n\) and has \(n!\) elements. We can denote an individual
permutation \(\pi\in\mathfrak{S}_n\) in \emph{one-line notation} by writing the
entries of the permutation in order, therefore \(\pi =
\perm{\pi(1),\pi(2),\dotsm,\pi(n)}\). The set \(\mathfrak{S}_0\) has exactly one
element, the empty permutation \(\varepsilon\).

\begin{definition}
    Two strings of integers \(\alpha_1\alpha_2\dotsm\alpha_n\) and
    \(\beta_1\beta_2\dotsm\beta_n\) are said to be \emph{order isomorphic}
    if they share the same relative order, \ie, \(\alpha_r<\alpha_s\) if and
    only if \(\beta_r<\beta_s\).
\end{definition}

The definition of order isomorphism allows us to give the meaning of containment for classical
permutation patterns.
\begin{definition}
    A permutation \(\pi\in\mathfrak{S}_n\) \emph{contains} the permutation
    \(\sigma\in\mathfrak{S}_k\) (\mbox{denoted} \(\sigma \preceq \pi\)) if there
    is some sequence \( i_1,i_2,\dotsc,i_k\) such that \(1\le
    i_1<i_2<\dotsm<i_k\le n\) and the sequence
    \(\pi(i_1)\pi(i_2)\dotsm\pi(i_k)\) is order isomorphic to
    \(\sigma(1)\sigma(2)\dotsm\sigma(k)\). If this is the case the sequence
    \(\pi(i_1)\pi(i_2)\dotsm\pi(i_k)\) is called an \emph{occurrence} of
    \(\sigma\) in \(\pi\). If \(\pi\) does not contain \(\sigma\), we say that
    \(\pi\) \emph{avoids} \(\sigma\). In this context \(\sigma\) is called a
    (\emph{classical}) \emph{permutation pattern}.
\end{definition}

\begin{example}
\label{ex:contexmpl}
The permutation \(\pi = \perm{2,4,1,5,3}\) contains the pattern \(\sigma =
\perm{2,3,1}\), since the second, fourth and fifth elements (\(453\)) are order
isomorphic to \(\perm{231}\). The permutation also contains the occurrence
\(241\) of the same pattern. The permutation \(\perm{2,4,1,5,3}\) avoids the
pattern \(\perm{3,2,1}\).
\end{example}

We denote the set of permutations of length \(n\) avoiding a pattern \(\sigma\)
as \(\Av_n(\sigma)\) and let
\(\av{\sigma}=\bigcup_{i=0}^{\infty}\Av_i(\sigma)\).

We can display a permutation graphically in a \emph{plot}, where we display the
points \(G(\pi) = \{(i,\pi(i))\mid i \in \nrange{n} \}\) in a Cartesian
coordinate system. The plots of the permutations \(\pi = \perm{2,4,1,5,3}\) and
\(\sigma = \perm{2,3,1}\) can be seen in \FigureRef{fig:plots}.
\FigureRef{fig:containment} shows the containment of \(\sigma\) in \(\pi\) as in
\ExampleRef{ex:contexmpl}.

The boxes in the plot of a permutation are denoted by \(\boks{i,j}\), where the
point \((i,j)\) is the lower left corner of the box.

\begin{figure}[htb]
    \begin{center}
    \raisebox{6ex}{\(G(\pi)=\)}
    \begin{tikzpicture}[scale=\picscale]
        \modpattern{}{2,4,1,5,3}{}
    \end{tikzpicture}
    \raisebox{6ex}{\(\quad{}G(\sigma)=\)}
    \raisebox{0.35cm}{
    \begin{tikzpicture}[scale=\picscale]
        \modpattern{}{2,3,1}{}
    \end{tikzpicture}}

        \caption{The plots of the permutations \(\pi\) and \(\sigma\).}
        \label{fig:plots}
    \end{center}
\end{figure}
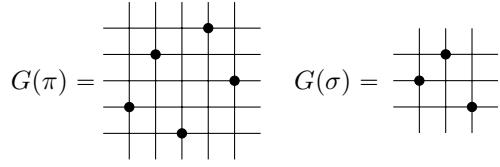

\begin{figure}[htb]
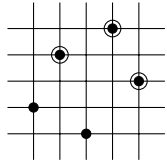

    \begin{center}
    \(\pattern{2,4,5}{2,4,1,5,3}{}\)
        \caption{The occurrence of 231 in 24153 corresponding to 453.}
        \label{fig:containment}
    \end{center}
\end{figure}

\begin{definition}
A \emph{mesh pattern} is a pair
\begin{equation*}
    p = (\tau,R)\text{ with } \tau \in \mathfrak{S}_k \text{ and } R \subseteq [0,k]\times [0,k].
\end{equation*}
\end{definition}

Formally defined by \textcite{journals/combinatorics/BrandenC11}, an
\emph{occurrence} of \(p\) in \(\pi\) is a subset \(\omega\) of the plot of
\(\pi, G(\pi) = \{(i,\pi(i)\mid i\in\nrange{n}\}\) such that there are
order-preserving injections \(\alpha,\beta:\nrange{k}\mapsto\nrange{n}\)
satisfying the following two conditions.

Firstly, \(\omega\) is an occurrence of \(\tau\) in the classical sense
  \begin{enumerate}[i.]
    \item \(\omega = \{(\alpha(i),\beta(j)):(i,j)\in G(\tau)\}\).
  \end{enumerate}\saveenum
  Define \(R_{ij} = [\alpha(i)+1,\alpha(i+1)-1]\times[\beta(j)+1,\beta(j+1)-1]\)
   for \(i,j\in[0,k]\) where \(\alpha(0)=\beta(0)=0\) and \(\alpha(k+1)=\beta(k+1)=n+1\). Then
   the second condition is
  \begin{enumerate}[i.]\resetenum
  \item if \(\boks{i,j} \in R \text{ then } R_{ij} \cap G(\pi) = \emptyset\).
\end{enumerate}
We call \(R_{ij}\) the \emph{region corresponding to} \(\boks{i,j}\).

\begin{example}
    The pattern \(p=\mperm{2,1,3}{\{\boks{0,1},\boks{0,2},\boks{1,0},\boks{1,1},\boks{2,1},\boks{2,2}\}}=
    \textpattern{}{ 2, 1, 3 }{ 0/1, 0/2, 1/0, 1/1, 2/1, 2/2 }\) is contained in
    \(\pi = \perm{3,4,2,1,5}\).
    Let us consider the plot for the permutation \(\pi\). The subsequence \(325\) is
    an occurrence of \(213\) in the classical sense and the remaining points of \(\pi\)
    are not contained in the regions corresponding to the shaded boxes in \(p\).
    \begin{equation*}
        \pattern{1,3,5}{3,4,2,1,5}{0/2,0/3,0/4,
                                   1/0,1/1,1/2,
                                   2/0,2/1,2/2,
                                   3/2,3/3,3/4,
                                   4/2,4/3,4/4}
    \end{equation*}
    The subsequence \(325\) is therefore an occurrence of the pattern \(p\) in \(\pi\).
\end{example}

We denote the avoidance sets for mesh patterns in the same way as for classical
patterns. Given a mesh pattern \(p=(\sigma,R)\) we say that \(\sigma\) is the
\emph{underlying classical pattern} of \(p\).

We define containment of a mesh pattern \(p\) in another mesh pattern \(q\) as
above, with the additional condition that if \(\boks{i,j}\in R\text{ then }
R_{ij}\) is contained in the mesh set of \(q\). More formally:

\begin{definition}
A mesh pattern \(q=(\kappa,T)\) \emph{contains} a mesh pattern \(p=(\tau,R)\) as
a \emph{subpattern} if \(\kappa\) contains \(p\) and
\(\left(\bigcup_{\boks{i,j}\in R} R_{ij}\right)\subseteq T\).
\end{definition}

\begin{example}
    The pattern \(p=\mperm{2,1,3}{\{\boks{0,1},\boks{0,2},\boks{1,0},\boks{2,2}\}}=
    \textpattern{}{ 2, 1, 3 }{ 0/1, 0/2, 1/0, 2/2 }\) is contained in the pattern
    \(q = \textpattern{}{4,2,3,1,5}{0/1,0/2,0/3,0/4,
                               1/0,1/1,1/2,1/3,1/4,
                               2/1,2/2,2/3,2/4,
                               3/2,3/4,
                               4/2,4/3,4/4} \) as a subpattern.
    The highlighted points form an occurrence of \(p\) in \(q\)
    \begin{equation*}
        \pattern{1,2,5}{4,2,3,1,5}{0/1,0/2,0/3,0/4,
                                   1/0,1/1,1/2,1/3,1/4,
                                   2/1,2/2,2/3,2/4,
                                   3/2,3/4,
                                   4/2,4/3,4/4}
    \end{equation*}
    The permutation \(\perm{4,2,3,1,5}\) also contains \(p\) in the usual sense.
\end{example}

\section{Coincidences between Mesh Patterns under a Dominating Pattern}
\label{sec:coincs}
Coincidences among small mesh patterns have been considered by
\textcite{DBLP:journals/corr/ClaessonTU14}, in which the authors use the
Simultaneous Shading Lemma, a closure result and one special case to fully
classify coincidences among mesh patterns of length 2.

Recall that two patterns \(\lambda\) and \(\gamma\) are considered
\emph{coincident} if the set of permutations that avoid \(\lambda\) is the same
as the set of permutations that avoid \(\gamma\), \ie, \(\av{\lambda} =
\av{\gamma}\). This is equivalent to \(\lambda\) and \(\gamma\) being contained
in the same set of permutations, \ie, \(\cont{\lambda} = \cont{\gamma}\).

We will consider the avoidance sets \(\av{\pi,p}\) where \(\pi\) is a fixed
classical pattern of length 3 and \(p\) is a mesh pattern of length 2 that
varies. The classical pattern \(\pi\) will be called the \emph{dominating
pattern}. Given such a dominating pattern \(\pi\) we will write \(p_1
\coinc{\pi} p_2\) if for the mesh patterns, \(p_1\) and \(p_2\),  the sets
\(\av{\pi,p_1}\) and \(\av{\pi,p_2}\) are equal. If this is the case we say
the two mesh patterns are \emph{coincident under} \(\pi\).

Our first step is to calculate whether \(\av{\pi,p_1}\) and \(\av{\pi,p_2}\) are
equal up to permutations of length 11 computationally, if they are we write
\(p_1 \coinc[comp]{\pi} p_2\). The equivalence relation \(\coinc{\pi}\) is a
refinement of the equivalence relation \(\coinc[comp]{\pi}\), and therefore the
\(\coinc{\pi}\)-equivalence classes form partitions of the
\(\coinc[comp]{\pi}\)-equivalence classes.

We then prove three propositions called ``Dominating Pattern Rules'', if the
First Dominating Pattern Rule shows that \(p_1 \coinc{\pi} p_2\) then we write
\(p_1 \coinc[(1)]{\pi} p_2\). If a combination of the First and Second
Dominating Pattern Rules show that \(p_1 \coinc{\pi} p_2\) then we write
\(p_1 \coinc[(2)]{\pi} p_2\); similarly we write \(p_1 \coinc[(3)]{\pi} p_2\) if
a combination of all three rules shows coincidence. These three equivalence
relations are successive refinements of \(\coinc{\pi}\). For future reference
we record the following:

\begin{note}
\label{note:main}
    If we find that \(\coinc[(i)]{\pi}\), for \(i = 1, 2\), or \(3\), equals
    \(\coinc[comp]{\pi}\) then the equivalence relations \(\coinc[(i)]{\pi}\),
    \(\coinc{\pi}\), and \(\coinc[comp]{\pi}\) are all equal.
\end{note}

In order to describe the rules it is useful to have a notion for inserting
points, ascents, and descents into a mesh pattern.
\begin{definition}
\label{def:ap}
Let \(p=(\tau,R)\) be a mesh pattern of length \(n\) such that
\(\boks{i,j}\notin R\). We define a mesh pattern \(p^\boks{i,j} =
(\tau^\prime,R^\prime)\) of length \(n+1\) as the pattern where a point is
\emph{inserted} into the box \(\boks{i,j}\) in \(G(p)\). Formally the new
underlying classical pattern is defined by
\begin{equation*}
\tau^\prime(k) = \begin{cases}
    j+1 & \text{if } k = i+1\\
    \tau(k) & \text{if } \tau(k)\le j \text{ and }k\le i\\
    \tau(k)+1 & \text{if } \tau(k)> j \text{ and }k\le i\\
    \tau(k-1) & \text{if } \tau(k)\le j \text{ and }k> i+1\\
    \tau(k-1)+1 & \text{if } \tau(k)> j \text{ and }k> i+1\\
\end{cases}
\end{equation*}
While the mesh becomes
\begin{equation*}
\begin{aligned}
R^\prime=&\{\boks{k,\ell}\mid k\le i, \ell\le j, \boks{k,\ell}\in R\} \cup\\
&\{\boks{k,\ell}\mid k\le i, \ell > j, \boks{k,\ell-1}\in R\}\cup\\
&\{\boks{k,\ell}\mid k > i, \ell \le j, \boks{k-1,\ell}\in R\}\cup\\
&\{\boks{k,\ell}\mid k > i, \ell > j, \boks{k-1,\ell-1}\in R\}\\
\end{aligned}
\end{equation*}
\end{definition}
In addition, we give the following definitions:
\begin{definition}
Let \(p=(\tau,R)\) be a mesh pattern of length \(n\) such that
\(\boks{i,j}\notin R\) and \(p^\boks{i,j} = (\tau^\prime,R^\prime)\) is as
defined in \DefinitionRef{def:ap}. We define the following four modifications of
\(p^\boks{i,j}\).
\begin{align*}
p^\boksup{i,j} &= (\tau^\prime, R^\prime \cup \{\boks{i,j+1},\boks{i+1,j+1}\})\\
p^\boksright{i,j} &= (\tau^\prime, R^\prime \cup \{\boks{i+1,j},\boks{i+1,j+1}\})\\
p^\boksdown{i,j} &= (\tau^\prime, R^\prime \cup \{\boks{i,j},\boks{i+1,j}\})\\
p^\boksleft{i,j} &= (\tau^\prime, R^\prime \cup \{\boks{i,j},\boks{i,j+1}\})\\
\end{align*}
\end{definition}
Informally, these are considering the topmost, rightmost, leftmost, or
bottommost point in \(\boks{i,j}\). We collect the resulting mesh patterns in a
set
\begin{equation*}
p^\boksall{i,j}=\left\{
p^\boks{i,j},
p^\boksright{i,j},
p^\boksleft{i,j},
p^\boksup{i,j},
p^\boksdown{i,j}\right\}
\end{equation*}
See \FigureRef{fig:addp} for an example of adding a point into a mesh pattern.

\begin{figure}
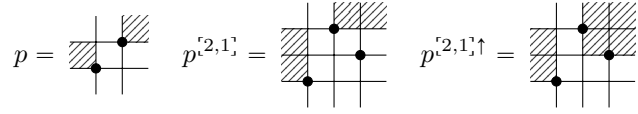

\begin{equation*}
p = \pattern{}{1,2}{0/1,2/2} \quad
p^\boks{2,1}= \pattern{}{1,3,2}{0/1,0/2,2/3,3/3}
\quad
p^\boksup{2,1}
= \pattern{}{1,3,2}{0/1,0/2,2/2,2/3,3/2,3/3}
\end{equation*}
\caption{The result of inserting a point into \(p=(12,\{\boks{0,1},\boks{2,2}\})\)}
\label{fig:addp}
\end{figure}

\begin{definition}
Let \(p=(\tau,R)\) be a mesh pattern of length \(n\) such that
\(\boks{i,j}\notin R\). We define a mesh pattern \(p^{\boks{i,j}_a} =
(\tau^\prime,R^\prime)\) (respectively, \(p^{\boks{i,j}_d}\)) of length \(n+2\) as the pattern
where an ascent (respectively descent) is \emph{inserted} into the box \(\boks{i,j}\) in
\(G(p)\). Formally the new underlying classical pattern is defined by
\begin{equation*}
\tau^\prime(k) = \begin{cases}
    j+t & \text{if } k = i+t,t\in\{1,2\}\\
    \tau(k) & \text{if } \tau(k)\le j \text{ and }k\le i\\
    \tau(k)+2 & \text{if } \tau(k)> j \text{ and }k\le i\\
    \tau(k-2) & \text{if } \tau(k)\le j \text{ and }k> i+2\\
    \tau(k-2)+2 & \text{if } \tau(k)> j \text{ and }k> i+2\\
\end{cases}
\end{equation*}
The ordering of the top branch determines whether an ascent(or descent) is added.

The mesh becomes
\begin{equation*}
\begin{aligned}
R^\prime=&\{\boks{k,\ell}\mid k\le i, \ell\le j, \boks{k,\ell}\in R\} \cup\\
&\{\boks{k,\ell}\mid k\le i, \ell > j, \boks{k,\ell-2}\in R\}\cup\\
&\{\boks{k,\ell}\mid k > i, \ell \le j, \boks{k-2,\ell}\in R\}\cup\\
&\{\boks{k,\ell}\mid k > i, \ell > j, \boks{k-2,\ell-2}\in R\}\cup\\
&\{\boks{i+1,j},\boks{i+1,j+1},\boks{i+1,j+2}\}
\end{aligned}
\end{equation*}
meaning that the ascent (or descent) is not allowed to be split in the box.
\end{definition}
An example of adding an ascent to a mesh pattern can be seen in \FigureRef{fig:adda}.
\begin{figure}
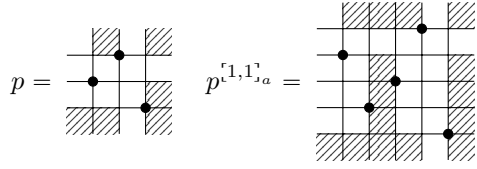

\begin{equation*}
p = \pattern{}{2,3,1}{0/0,1/0,1/3,
                      3/0,3/1,3/3} \quad
p^{\boks{1,1}_a}= \pattern{}{4,2,3,5,1}{0/0,
                        1/0,1/5,
                        2/0,2/1,2/2,2/3,2/5,
                        3/0,3/5,
                        5/0,5/1,5/2,5/3,5/5}
\end{equation*}
\caption{The result of inserting an ascent into
\(p=(231,\{\boks{0,0},\boks{1,0},\boks{1,3},\boks{3,0},\boks{3,1},\boks{3,3}\})\)}
\label{fig:adda}
\end{figure}

We now attempt to fully classify coincidences in families characterised by
avoidance of a classical pattern of length \(3\) and a mesh pattern of length
\(2\), that is finding and explaining all coincidences between mesh patterns
\(m\) and \(m^\prime\), \(m\coinc{\pi} m^\prime\), where \(\pi\) is a classical
pattern of length \(3\).

It can be easily seen that in order to classify coincidences one need only
consider coincidences within the family of mesh patterns with the same
underlying classical pattern, this is due to the fact that
\(\perm{2,1}\in\av{\mperm{1,2}{R}}\) and
\(\perm{1,2}\in\av{\{\mperm{2,1}{R}\}}\) for all mesh-sets \(R\).

We know that there are a total of \(512\) mesh-sets for each underlying
classical pattern. By use of the previous results of
\textcite{DBLP:journals/corr/ClaessonTU14} the number of coincidence classes can
be reduced to \(220\).

\subsection{Coincidence classes of Av(\{321, (21, \textit{R})\})}
Through experimentation, considering avoidance of permutations up to length
\(11\), we discover that there are \(29\) \(\coinc[comp]{321}\)-equivalence
classes where the underlying classical pattern of the mesh pattern is \(21\).

\begin{proposition}[First Dominating Pattern Rule]
    \label{prop:dom1}
    Given two mesh patterns \(m_1 =(\sigma, R_1)\) and \(m_2 = (\sigma, R_2)\),
    and a dominating classical pattern \(\pi = (\pi,\emptyset)\) such that
    \(\setsize{\pi} \le \setsize{\sigma} + 1\), then \(m_1 \coinc{\pi} m_2 \)
    if
    \begin{enumerate}
        \item The mesh set \(R_2 = R_1 \cup \{\boks{a,b}\}\)
        \item \(\pi \preceq \sigma^\boks{a,b}\)\label{prop:dom1:cont}
    \end{enumerate}
\end{proposition}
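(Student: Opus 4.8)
The plan is to establish the set equality $\av{\pi,m_1}=\av{\pi,m_2}$ underlying $m_1\coinc{\pi}m_2$ by proving the two inclusions separately. The inclusion $\av{\pi,m_1}\subseteq\av{\pi,m_2}$ is immediate and does not use the dominating pattern at all: since $R_1\subseteq R_2$, every box shaded in $m_1$ is also shaded in $m_2$, so the emptiness requirements for an occurrence of $m_2$ form a superset of those for $m_1$. Hence every occurrence of $m_2$ is simultaneously an occurrence of $m_1$, giving $\cont{m_2}\subseteq\cont{m_1}$ and therefore $\av{m_1}\subseteq\av{m_2}$; intersecting both sides with $\av{\pi}$ yields the claim.

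For the reverse inclusion $\av{\pi,m_2}\subseteq\av{\pi,m_1}$ I would argue by contraposition, showing that every permutation $w\in\av{\pi}$ that contains $m_1$ must already contain $m_2$. Fix an occurrence $\omega$ of $m_1$ in $w$; by definition $\omega$ is a classical occurrence of $\sigma$ whose regions corresponding to the boxes of $R_1$ are empty. Because $R_2=R_1\cup\{\boks{a,b}\}$, the occurrence $\omega$ can fail to be an occurrence of $m_2$ only if the region $R_{ab}$ corresponding to $\boks{a,b}$ contains a point of $w$. Thus it suffices to rule out exactly this one possibility.

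Suppose, for contradiction, that there is a point $q\in R_{ab}\cap G(w)$. The key step is to observe that adjoining $q$ to the classical occurrence $\omega$ of $\sigma$ produces a classical occurrence of $\sigma^{\boks{a,b}}$ in $w$: by the construction of $R_{ab}$ the point $q$ sits horizontally between the $a$th and $(a{+}1)$st points and vertically between the $b$th and $(b{+}1)$st values of $\omega$, which is precisely the relative position prescribed when a point is inserted into $\boks{a,b}$ in \DefinitionRef{def:ap}. Condition~\ref{prop:dom1:cont} then gives, by transitivity of classical containment, $\pi\preceq\sigma^{\boks{a,b}}\preceq w$, contradicting $w\in\av{\pi}$. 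Hence $R_{ab}\cap G(w)=\emptyset$, so $\omega$ is in fact an occurrence of $m_2$, and $w$ contains $m_2$ as required.

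The only genuinely delicate point is the verification that $\omega$ together with $q$ is order-isomorphic to the underlying pattern of $\sigma^{\boks{a,b}}$; this is a routine but careful check against the case analysis defining $\tau'$ in \DefinitionRef{def:ap}, and it is the single place where the positional data of the region $R_{ab}$ is consumed. The size hypothesis $\setsize{\pi}\le\setsize{\sigma}+1$ plays no independent role in the argument: it is already forced by condition~\ref{prop:dom1:cont}, since $\sigma^{\boks{a,b}}$ has length $\setsize{\sigma}+1$, and is recorded only to keep the statement self-contained.
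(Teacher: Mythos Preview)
Your argument is correct and mirrors the paper's proof essentially step for step: the easy inclusion via $R_1\subseteq R_2$ (the paper packages this as \NoteRef{not:downcmesh}), and the reverse inclusion by observing that a point in the region $R_{ab}$ together with the occurrence of $m_1$ yields an occurrence of $\sigma^{\boks{a,b}}$, hence of $\pi$. Your remark that the size hypothesis $\setsize{\pi}\le\setsize{\sigma}+1$ is already forced by condition~\ref{prop:dom1:cont} is a valid additional observation not made explicit in the paper.
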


This rule can be understood in graphical form. In the pattern in
\FigureRef{fig:rule1} we can gain shading in the boxes \(\boks{0,2}\), \(\boks{2,0}\)
since if there is a point in either of these boxes there would be an occurrence of
the dominating pattern \(\perm{3,2,1}\).

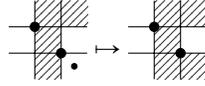
\begin{figure}[htb]
    \begin{center}
    \begin{tikzpicture}[scale=\picscale]
        \filldraw (2.5,0.5) circle (3pt);
        \modpattern{}{2,1}{1/0,1/1,1/2,2/2}
    \end{tikzpicture}
        \raisebox{2ex}{\(\mapsto\)}
    \begin{tikzpicture}[scale=\picscale]
        \modpattern{}{2,1}{1/0,1/1,1/2,2/2,2/0}
    \end{tikzpicture}

        \caption{Visual depiction of first dominating pattern rule.}
        \label{fig:rule1}
    \end{center}
\end{figure}

In order to prove the proposition we must first make the following note.

\begin{note}
    \label{not:downcmesh}
    Let \(R_1 \subseteq R_2\). Then any occurrence of \((\tau, R_2)\) in a permutation
    is an occurrence of \((\tau,R_1)\).
\end{note}

\begin{proof}[Proof of \PropositionRef{prop:dom1}]
    Since \(R_1\) is a subset of \(R_2\), \NoteRef{not:downcmesh} implies that
    \(\av{\{\pi,m_1\}} \subseteq \av{\{\pi,m_2\}}\).

    Now we consider a permutation \(\omega \in \av{\pi}\), containing an
    occurrence of \(m_1\). If there is a point in the region corresponding to
    the box \(\boks{a,b}\), then that point, along with the points of the
    occurrence of \(m_1\), form an occurrence of \(\sigma^\boks{a,b}\). Then
    condition~\eqref{prop:dom1:cont} of the proposition implies an occurrence of
    \(\pi\). Therefore there can be no points in this region, which implies the
    occurrence of \(m_1\) is an occurrence of \(m_2\). Hence every occurrence of
    \(m_1\) is in fact an occurrence of \(m_2\), and we have that
    \(\av{\{\pi,m_2\}} \subseteq \av{\{\pi,m_1\}}\).

    Taking both directions of the containment we can therefore draw the
    conclusion that \(m_1 \coinc{\pi} m_2\).
\end{proof}

After implementing the First Dominating Pattern Rule we find that there are
\(29\) \(\coinc[(1)]{321}\)-equivalence classes, of mesh patterns where the
underlying classical pattern is \(21\). By \NoteRef{note:main} there are
therefore exactly \(29\) \(\coinc{321}\)-equivalence classes.

\subsection{Coincidence classes of Av(\{231, (21, \textit{R})\}).}
By application of First Dominating Pattern Rule we obtain \(43\)
\(\coinc[(1)]{231}\)-equivalence classes between mesh patterns with \(21\) as an
underlying classical pattern. Experimentation shows that there are \(39
\coinc[comp]{231}\)-equivalence classes between mesh patterns with \(21\) as an
underlying classical pattern, for example the following two patterns are in the
same \(\coinc[comp]{231}\)-equivalence class, but this is not explained by the
First Dominating Pattern Rule.
\begin{equation*}
    m_1 = \pattern{}{2,1}{0/0,0/1,0/2,1/0,2/1} \text{ and } m_2 = \pattern{}{2,1}{0/0,0/1,0/2,1/0,1/1,2/1}
\end{equation*}

Consider an occurrence of \(m_1\) in a permutation in \(\av{\perm{2,3,1}}\),
consisting of elements \(x\) and \(y\). If the region corresponding to the box
\(\boks{1,1}\) is empty we have an occurrence of \(m_2\). Otherwise, if there is
any ascent in this box then we would have an occurrence of \(\perm{2,3,1}\),
however, since the permutation is in \(\av{\perm{2,3,1}}\) this is not possible.
This box must therefore contain a (non-empty) decreasing subsequence. This gives
rise to the following lemma:

\begin{lemma}
    \label{lem:incdec}
    Let \(m =(\sigma, R)\) be a mesh pattern, where \(\boks{a,b} \notin R\),
    and \(\pi = (\pi,\emptyset)\) be a dominating classical pattern. If
    \(\pi \preceq m^{\boks{a,b}_a}\)\\(\(\pi \preceq m^{\boks{a,b}_d}\)),
    then in any occurrence of \(m\) in a permutation \(\varrho\), the region
    corresponding to the box \(\boks{a,b}\) can only contain an decreasing
    (increasing) subsequence of \(\varrho\).
\end{lemma}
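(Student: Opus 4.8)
The plan is to prove the ascent statement; the descent case then follows by the top–bottom reflection, with $m^{\boks{a,b}_a}$ replaced by $m^{\boks{a,b}_d}$ throughout. So fix an occurrence $\omega$ of $m=(\sigma,R)$ in a permutation $\varrho$ that avoids $\pi$, and let $R_{ab}$ be the region corresponding to the empty box $\boks{a,b}$. I want to show that the points of $\varrho$ lying in $R_{ab}$, read from left to right, have strictly decreasing heights (that is, they contain no ascent). I will argue by contradiction: assuming $R_{ab}$ contains an ascent, I will manufacture an occurrence of $m^{\boks{a,b}_a}$ in $\varrho$, which by the hypothesis $\pi\preceq m^{\boks{a,b}_a}$ forces an occurrence of $\pi$ in $\varrho$, contradicting $\varrho\in\av{\pi}$.

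First I would reduce an arbitrary ascent to a \emph{consecutive} one. List the points of $\varrho$ inside $R_{ab}$ in order of increasing $x$-coordinate as $p_1,p_2,\dots,p_m$. If no consecutive pair $p_s,p_{s+1}$ formed an ascent, then the heights of $p_1,\dots,p_m$ would be strictly decreasing and the region would contain no ascent at all; hence some consecutive pair $x=p_s$, $y=p_{s+1}$ is an ascent. The point of insisting on a consecutive pair is that, within the vertical band cut out by the $y$-extent of $R_{ab}$, there is no point of $\varrho$ whose $x$-coordinate lies strictly between those of $x$ and $y$.

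The main step is to check that $\omega\cup\{x,y\}$ is an occurrence of $m^{\boks{a,b}_a}=(\tau',R')$. Since $x$ and $y$ both lie in $R_{ab}$ and form an ascent, they sit in the correct cell relative to $\omega$ to realise the two inserted points of $\tau'$, so $\omega\cup\{x,y\}$ is a classical occurrence of $\tau'$. For the shading I would treat the two kinds of shaded boxes of $R'$ separately. The boxes inherited from $R$ only subdivide, but do not enlarge, the regions that were already empty because $\omega$ is an occurrence of $m$: the new grid lines through $x$ and $y$ cut those regions into smaller pieces, all of which remain shaded in $R'$, so their union is unchanged and still contains no point of $\varrho$ (and neither $x$ nor $y$, which lie in the unshaded $R_{ab}$). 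The three new boxes $\boks{a+1,b},\boks{a+1,b+1},\boks{a+1,b+2}$ together cover exactly the part of the $R_{ab}$ band lying strictly between $x$ and $y$ in the $x$-direction; by the consecutiveness secured above, this band contains no point of $\varrho$, so these boxes are empty as well. Hence every shading condition holds and $\omega\cup\{x,y\}$ is indeed an occurrence of $m^{\boks{a,b}_a}$.

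Finally, as in the proof of \PropositionRef{prop:dom1}, any occurrence of $m^{\boks{a,b}_a}$ is in particular a classical occurrence of its underlying pattern $\tau'$, and $\pi\preceq m^{\boks{a,b}_a}$ then yields an occurrence of $\pi$ inside it; thus $\varrho$ contains $\pi$, the desired contradiction, and the region $R_{ab}$ can hold only a decreasing subsequence. The hard part is the shading verification of the previous paragraph—in particular pinning down that the three newly shaded boxes correspond precisely to the inter-point band and then exploiting the consecutive ascent to see that this band is empty; everything else is bookkeeping with the insertion formulas.
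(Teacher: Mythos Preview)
Your argument is correct and follows the same line as the paper, which simply says the proof is analogous to that of \PropositionRef{prop:dom1}. One simplification worth noting: since the dominating pattern \(\pi=(\pi,\emptyset)\) has empty shading, the hypothesis \(\pi\preceq m^{\boks{a,b}_a}\) is just classical containment of \(\pi\) in the underlying pattern \(\tau'\); hence any ascent in \(R_{ab}\) (not only a consecutive one) together with \(\omega\) already gives a classical occurrence of \(\tau'\), and therefore of \(\pi\), so the reduction to a consecutive ascent and the verification of the mesh conditions for \(m^{\boks{a,b}_a}\) are unnecessary.
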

The proof is analogous to the proof of \PropositionRef{prop:dom1}.

Going back to our example mesh patterns
\begin{equation*}
    \begin{tikzpicture}[scale=\picscale]
        \modpattern{}{2,1}{0/0,0/1,0/2,1/0,2/1}
        \filldraw (1.66,1.33) circle (3pt);
        \filldraw (1.33,1.66) circle (3pt);
    \end{tikzpicture}
\end{equation*}
we know that the region corresponding to the box \(\boks{1,1}\) contains a
decreasing subsequence. If we let \(z\) be the topmost point in this decreasing
subsequence, then \(xz\) is an occurrence of \(m_2\). This shows that our two
example patterns are coincident. This example generalises into the following
rule.

\begin{proposition}[Second Dominating Pattern Rule]
    \label{prop:dom2}
    Given two mesh patterns \(m_1 =(\sigma, R_1)\) and \(m_2 = (\sigma, R_2)\),
    and a dominating classical pattern \(\pi = (\pi,\emptyset)\) such that
    \(\setsize{\pi} \le \setsize{\sigma} + 2\), then \(m_1 \coinc{\pi} m_2 \)
    if
\begin{enumerate}
  \item The mesh set \(R_2 = R_1 \cup \{\boks{a,b}\}\)
  \item\label{prop:dom2:cond} Any one of the following four conditions hold
    \begin{enumerate}
    \item\label{prop:dom2:condc} \(\pi \preceq \sigma^{\boks{a,b}_a}\) and
      \begin{enumerate}
        \item \((a+1,b) \in \sigma\) and \(\boks{a+1,b-1}\notin R_1\) and \\
          \(\boks{x,b-1}\in R_1 \implies \boks{x,b} \in R_1 \) (where \(x\neq a,a+1\)) and\\
          \(\boks{a+1,y}\in R_1 \implies \boks{a,y} \in R_1\) (where \(y\neq b-1,b\)).
        \item \((a,b+1) \in \sigma\) and \(\boks{a-1,b+1}\notin R_1\) and \\
          \(\boks{x,b+1}\in R_1 \implies \boks{x,b} \in R_1\) (where \(x\neq a-1,a\)) and\\
          \(\boks{a-1,y}\in R_1 \implies \boks{a,y} \in R_1\) (where \(y\neq b,b+1\)).
      \end{enumerate}
    \item \(\pi \preceq \sigma^{\boks{a,b}_d}\) and
      \begin{enumerate}
        \item \((a+1,b+1) \in \sigma\) and \(\boks{a+1,b+1}\notin R_1\) and \\
          \(\boks{x,b+1}\in R_1 \implies \boks{x,b} \in R_1\) (where \(x\neq a,a+1\)) and\\
          \(\boks{a+1,y}\in R_1 \implies \boks{a,y} \in R_1\) (where \(y\neq b,b+1\)).
        \item \((a,b) \in \sigma\) and \(\boks{a-1,b-1}\notin R_1\) and \\
          \(\boks{x,b-1}\in R_1 \implies \boks{x,b} \in R_1\) (where \(x\neq a-1,a\)) and\\
          \(\boks{a-1,y}\in R_1 \implies \boks{a,y} \in R_1\)  (where \(y\neq b-1,b\)).
      \end{enumerate}
    \end{enumerate}
\end{enumerate}
\end{proposition}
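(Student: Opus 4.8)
The plan is to prove the two inclusions $\av{\{\pi,m_1\}}\subseteq\av{\{\pi,m_2\}}$ and $\av{\{\pi,m_2\}}\subseteq\av{\{\pi,m_1\}}$ separately, exactly in the spirit of the proof of \PropositionRef{prop:dom1}. The first inclusion needs none of the side conditions: since $R_1\subseteq R_2$, \NoteRef{not:downcmesh} says every occurrence of $m_2$ is an occurrence of $m_1$, so any permutation containing $m_2$ contains $m_1$, and the inclusion follows by contraposition. All the substance is in the reverse inclusion, which reduces to showing that whenever $\varrho\in\av{\pi}$ contains an occurrence of $m_1$ it also contains an occurrence of $m_2$. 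The four alternatives in condition~\eqref{prop:dom2:cond} are interchanged by the reverse, complement, and inverse symmetries of mesh patterns, so I would prove in full only the representative case in which $\pi\preceq\sigma^{\boks{a,b}_a}$ and $(a+1,b)\in\sigma$ (\ie the first case of~\eqref{prop:dom2:condc}), and then obtain the other three by symmetry.

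So fix an occurrence $\omega$ of $m_1$ in $\varrho$, given by order-preserving injections $\alpha,\beta$, and look at the region $R_{ab}$ corresponding to $\boks{a,b}$. If $R_{ab}\cap G(\varrho)=\emptyset$, then $\omega$ is already an occurrence of $m_2$ and we are done. Otherwise, because $\pi\preceq\sigma^{\boks{a,b}_a}$, \LemmaRef{lem:incdec} forces the points of $\varrho$ lying in $R_{ab}$ to form a non-empty decreasing subsequence; let $z$ be its topmost point, which by decreasingness is also its leftmost. The hypothesis $(a+1,b)\in\sigma$ means the occurrence point $p$ playing the role of $\sigma(a+1)=b$ sits at the corner $(\alpha(a+1),\beta(b))$, immediately to the lower-right of $R_{ab}$. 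I would then form $\omega'$ by replacing $p$ with $z$. Since $z$ lies strictly inside $\boks{a,b}$ it inherits precisely the column-rank $a+1$ and the row-rank $b$ that $p$ had, so $\omega'$ is again an occurrence of $\sigma$ in the classical sense.

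It remains to verify that $\omega'$ meets the shading of $m_2$, and this bookkeeping is the main obstacle. Passing from $\omega$ to $\omega'$ moves the rank-$(a+1)$ vertical boundary left (from $\alpha(a+1)$ to the column of $z$) and the rank-$b$ horizontal boundary up (from $\beta(b)$ to the row of $z$). Hence the regions of boxes in column $a$ and in row $b$ only shrink, so any box of $R_1$ there stays empty; the only regions that grow are those of boxes in column $a+1$ (extending left) and in row $b-1$ (extending up), together with the freshly shaded box $\boks{a,b}$. A box $\boks{a+1,y}$ with $y\neq b-1,b$ gains a strip contained in the old region of $\boks{a,y}$, so the implication $\boks{a+1,y}\in R_1\Rightarrow\boks{a,y}\in R_1$ keeps it empty; symmetrically a box $\boks{x,b-1}$ with $x\neq a,a+1$ gains a strip contained in the old region of $\boks{x,b}$ (the single intervening row $\beta(b)$ being occupied only by $p$, hence empty in these columns), which is exactly why $\boks{x,b-1}\in R_1\Rightarrow\boks{x,b}\in R_1$ is imposed. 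The displaced point $p$ itself falls into $\boks{a+1,b-1}$, which is unshaded thanks to $\boks{a+1,b-1}\notin R_1$; and the new region of $\boks{a,b}$ is empty because any point of it would lie above and to the left of $z$ within the old decreasing subsequence, contradicting the choice of $z$ as its topmost (leftmost) point. With every box of $R_2$ thus shown empty, $\omega'$ is an occurrence of $m_2$, giving $m_1\coinc{\pi}m_2$; the remaining three cases of~\eqref{prop:dom2:cond} follow by the corresponding symmetries.
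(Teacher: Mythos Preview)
Your proof is correct and follows essentially the same route as the paper: one inclusion via \NoteRef{not:downcmesh}, the other by invoking \LemmaRef{lem:incdec} to force a decreasing subsequence in the region of \(\boks{a,b}\), then replacing the adjacent pattern point \((a+1,b)\) with the topmost element \(z\) and checking the mesh conditions. Your bookkeeping is in fact more explicit than the paper's (which simply asserts that ``the conditions on the mesh ensure'' the new occurrence works); the one small imprecision is your claim that boxes in column \(a\) and row \(b\) ``only shrink''---the corner boxes \(\boks{a,b-1}\) and \(\boks{a+1,b}\) grow in one direction while shrinking in the other, but the gained strips lie in the old \(R_{ab}\) above-left or below-right of \(z\), which are empty by the extremal choice of \(z\), so the conclusion is unaffected.
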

\begin{proof}
    By \NoteRef{not:downcmesh} we only need to show that an occurrence of
    \(m_1\) implies an occurrence of \(m_2\). We consider taking the first
    branch of every choice in condition~\eqref{prop:dom2:cond}. Consider a
    permutation \(\omega \in \av{\pi}\). Suppose \(\omega\) contains \(m_1\) and
    consider the region corresponding to \(\boks{a,b}\) in \(R_1\). If the
    region is empty, the occurrence of \(m_1\) is trivially an occurrence of
    \(m_2\).

    If the region is non-empty, then by \LemmaRef{lem:incdec} and
    condition~\eqref{prop:dom2:condc} of the proposition it must contain a
    decreasing subsequence. We can choose the topmost point in the region to
    replace the corresponding point in the mesh pattern and the points from the
    subsequence are now in the box southeast of the point.

    The conditions on the mesh ensure that no elements of the permutation that
    were inside a region corresponding to an unshaded box in the occurrence of
    \(m_1\) would be in a region corresponding to a shaded box in an occurrence
    of \(m_2\).

    Hence there are no points in the region corresponding to the box
    \(\boks{a,b}\) in the mesh pattern, and therefore we can shade this region.
    This implies that every occurrence of \(m_1\) in \(\av{\pi}\) can be
    modified into an occurrence of \(m_2\) so \(\av{\{\pi,m_2\}} \subseteq
    \av{\{\pi,m_1\}}\).

    Similar arguments cover the remainder of the branches.
\end{proof}

This proposition essentially states that we slide all of the points in the box
we desire to shade diagonally, and chose the topmost/bottommost point to replace
the original point in the mesh pattern. \FigureRef{fig:d2} show how the cases
apply to a general container of \(m_1\) to transform it into a container of
\(m_2\), the circled points are the same points in the permutation.
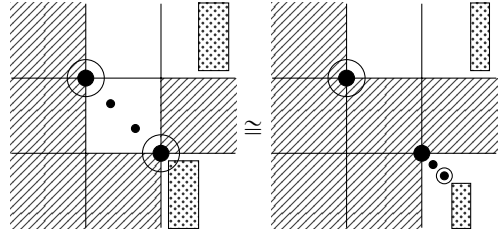
\begin{figure}
\begin{center}
\begin{tikzpicture}
	\modpattern[3]{1,2}{2,1}{0/0,0/1,0/2,1/0,2/1}
	\filldraw (1.33,1.66) circle (1.5pt);
	\filldraw (1.66,1.33) circle (1.5pt);
	\filldraw[pattern=crosshatch dots, pattern color=black!80] (2.1,0.9) rectangle (2.5,0.0);
	\filldraw[pattern=crosshatch dots, pattern color=black!80] (2.5,3) rectangle (2.9 ,2.1);
\end{tikzpicture}
\raisebox{8ex}{\(\coinc{}\)}
\begin{tikzpicture}
	\modpattern[3]{1}{2,1}{0/0,0/1,0/2,1/0,1/1,2/1}
	\filldraw (2.15,0.85) circle (1.5pt);
	\filldraw (2.3,0.7) circle (1.5pt);
	\draw (2.3,0.7) circle (3pt);
	\filldraw[pattern=crosshatch dots, pattern color=black!80] (2.4,0.6) rectangle (2.65,0.0);
	\filldraw[pattern=crosshatch dots, pattern color=black!80] (2.65,3) rectangle (2.9 ,2.1);
\end{tikzpicture}
\caption{A depiction of the second dominating pattern rule using our example patterns.}
\label{fig:d2}
\end{center}
\end{figure}

After implementing the Second Dominating Pattern Rule we find that there are
\(39\) \(\coinc[(2)]{231}\)-equivalence classes, of mesh patterns where the
underlying classical pattern is \(21\). By \NoteRef{note:main} there are
therefore exactly \(39\) \(\coinc{231}\)-equivalence classes.

\subsection{Coincidence classes of Av(\{231, (12, \textit{R})\}).}
When considering the coincidence classes of
\(\av{\{\perm{2,3,1},\mperm{1,2}{R}\}}\) we first apply the two Dominating
Pattern rules previously established. Starting from \(220\) classes, we result
in \(85 \coinc[(1)]{231}\)-equivalence classes reducing to \(59
\coinc[(2)]{231}\)-equivalence classes, considering mesh patterns with \(12\) as
the underlying classical patterns. However there are
\(56 \coinc[comp]{231}\)-equivalence classes.

For example the patterns
\begin{equation*}
    m_1 = \pattern{}{1,2}{0/2,2/0,2/1} \text{ and } m_2 = \pattern{}{1,2}{0/2,1/0,2/0,2/1}
\end{equation*}
are \(\coinc[comp]{231}\)-equivalent.

Consider an occurrence of \(m_1\) in a permutation, if the region corresponding
to the box \(\boks{1,0}\) is empty then we have an occurrence of \(m_2\). Now
look at the case when this region is not empty, and consider choosing the
rightmost point in the region. This gives us an occurrence of the following mesh
pattern.
\begin{equation*}
    \pattern{}{2,1,3}{0/3,2/0,2/1,3/0,3/1,3/2}
\end{equation*}
By application of First Dominating Pattern Rule we obtain the following mesh
pattern
\begin{equation*}
    \pattern{2,3}{2,1,3}{0/3,1/3,2/0,2/1,3/0,3/1,3/2}
\end{equation*}
The highlighted points are an occurrence of the mesh pattern \(m_2\). This gives
rise to the following rule:

\begin{proposition}[Third Dominating Pattern Rule]
    \label{prop:dom3}
    Given two mesh patterns \(m_1 =(\sigma, R_1)\) and \(m_2 = (\sigma, R_2)\),
    and a dominating classical pattern \(\pi = (\pi,\emptyset)\), then
    \(m_1 \coinc{\pi} m_2\) if
    \begin{enumerate}
        \item The mesh set \(R_2 = R_1 \cup \{\boks{a,b}\}\)
        \item\label{prop:dom3:condocc} One of the patterns in \(m_1^\boksall{a,b}\)
            is coincident with a mesh pattern containing an occurrence of
            \(m_2\) as a subpattern.
    \end{enumerate}
\end{proposition}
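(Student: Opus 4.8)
The plan is to establish the two set inclusions that together give $\av{\{\pi, m_1\}} = \av{\{\pi, m_2\}}$. Since $R_1 \subseteq R_2$, \NoteRef{not:downcmesh} immediately yields $\av{\{\pi, m_1\}} \subseteq \av{\{\pi, m_2\}}$, so all the content lies in the reverse inclusion. Equivalently, I must show that every $\omega \in \av{\pi}$ that contains an occurrence of $m_1$ also contains an occurrence of $m_2$; by contraposition within $\av{\pi}$ this gives $\av{\{\pi, m_2\}} \subseteq \av{\{\pi, m_1\}}$. So I would fix such an $\omega$ together with an occurrence of $m_1$ in it, and let $R_{ab}$ be the region corresponding to the single box $\boks{a,b}$ that distinguishes $R_2$ from $R_1$. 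If $R_{ab} \cap G(\omega) = \emptyset$, then the chosen occurrence of $m_1$ is already an occurrence of $m_2$ and we are done; the real work is the case $R_{ab} \cap G(\omega) \neq \emptyset$.

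In that case I would select from $R_{ab}$ the extremal point dictated by whichever member of $m_1^\boksall{a,b}$ is named in condition~\eqref{prop:dom3:condocc}: the topmost point for $m_1^\boksup{a,b}$, the rightmost for $m_1^\boksright{a,b}$, the bottommost for $m_1^\boksdown{a,b}$, the leftmost for $m_1^\boksleft{a,b}$ (for the bare insertion $m_1^\boks{a,b}$ any point of $R_{ab}$ will do). Adjoining this point to the occurrence of $m_1$ produces an occurrence of the corresponding pattern $p \in m_1^\boksall{a,b}$: the two boxes newly shaded in each directional pattern are precisely the two sub-boxes of $\boks{a,b}$ lying beyond the chosen point in its direction of extremality, and extremality guarantees that their regions meet $G(\omega)$ in the empty set, while every box inherited from $R_1$ stays empty because it was empty in the original occurrence. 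Thus $\omega$ contains $p$.

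Now I would invoke condition~\eqref{prop:dom3:condocc}: this $p$ is coincident under $\pi$ with some mesh pattern $q = (\kappa, T)$ that contains $m_2$ as a subpattern. Because $\omega \in \av{\pi}$ and $p \coinc{\pi} q$, containment of $p$ in $\omega$ forces containment of $q$ in $\omega$. It then remains to descend from $q$ back to $m_2$, and for this I would establish (or cite) the composition principle for subpattern containment: if a permutation $\omega$ contains a mesh pattern $q$, and $q$ contains $m_2$ as a subpattern, then $\omega$ contains $m_2$. Applying it produces an occurrence of $m_2$ in $\omega$, completing the reverse inclusion and hence the coincidence $m_1 \coinc{\pi} m_2$.

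The step I expect to be the main obstacle is precisely this composition principle. The definition of subpattern containment records that $\bigcup_{\boks{i,j} \in R_2} R_{ij} \subseteq T$ inside the plot of $q$, and to compose I must verify that an occurrence of $\kappa$ in $\omega$ transports the empty regions witnessed by $T$ onto exactly the empty regions demanded by $R_2$; this is a bookkeeping argument about the composition of the two pairs of order-preserving injections, conceptually clear but technically the delicate point. The directional-shading verification of the second paragraph is the only other place needing care, and once the composition principle is in hand the remaining branches of condition~\eqref{prop:dom3:condocc} follow by the same argument with the coordinate roles permuted.
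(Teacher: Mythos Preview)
Your proof is correct and follows essentially the same approach as the paper: use \NoteRef{not:downcmesh} for the easy inclusion, then for the reverse inclusion case-split on whether the region corresponding to \(\boks{a,b}\) is empty, and in the non-empty case pass through an occurrence of some \(r \in m_1^\boksall{a,b}\), then to its coincident pattern \(r'\), then down to the subpattern \(m_2\). You are in fact more careful than the paper, which glosses over both the extremal-point selection and the composition principle for subpattern containment; those are exactly the points you flag, and the paper simply asserts them.
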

\begin{proof}
    \NoteRef{not:downcmesh} implies \(\av{\{\pi,m_1\}} \subseteq \av{\{\pi,m_2\}}\)
as before. Now consider a permutation \(\omega \in \av{\pi}\) that contains an
occurrence of \(m_1\). If the region corresponding to the box \(\boks{a,b}\) is
empty then we have an occurrence of \(m_2\). By
condition~\ref{prop:dom3:condocc} of the proposition there exists a pattern
\(r\) in \(m_1^\boksall{a,b}\) such that \(m_2\) is a subpattern in some pattern
\(r'\) coincident with \(r\). If the region is non-empty then there exists an
occurrence of \(r\) in \(\omega\). Therefore, there is an occurrence of \(r'\) in
\(\omega\), which implies that there is an occurrence of \(m_2\) in \(\omega\).
Thus \(\av{\{\pi,m_2\}} \subseteq \av{\{\pi,m_1\}}\) and the two patterns are
coincident.
\end{proof}

After implementing the Third Dominating Pattern Rule we find that there are
\(56\) \(\coinc[(3)]{231}\)-equivalence classes, of mesh patterns where the
underlying classical pattern is \(12\). By \NoteRef{note:main} there are
therefore exactly \(56\) \(\coinc{231}\)-equivalence classes, since we have
observed \(56\) \(\coinc[comp]{231}\)-equivalence classes through
experimentation.

\subsection{Coincidence classes of Av(\{321, (12, \textit{R})\}).}
When considering coincidences of mesh patterns with underlying classical pattern
\(\perm{1,2}\) in \(\av{\perm{3,2,1}}\) application of the previously
established rules gives no coincidences. Through experimentation we discover
that there are \(7\) \(\coinc[comp]{321}\)-equivalence classes that are
unexplained. Since the number of coincidences is so small we will reason for
these coincidences without attempting to generalise into concrete rules.

Intuitively, it is easy to see why our previous rules have no power here. It is
impossible to add a single point to a mesh pattern \((12, R)\) and create an
occurrence of \(\pi = \perm{3,2,1}\). It is also impossible to have a position
where addition of an ascent, or descent, provides extra shading power.

The patterns
\begin{equation*}
    m_1 = \pattern{}{1,2}{0/0, 0/2, 1/1, 1/2, 2/1} \et m_2 \pattern{}{1,2}{0/0,
    0/1, 0/2, 1/1, 1/2, 2/1}
\end{equation*}
are \(\coinc[comp]{321}\)-equivalent. (There are 3 symmetries of these
patterns that are also equivalent to each other by the same reasoning.)

Consider the region corresponding to the box \(\boks{0,1}\) in any occurrence of
\(m_1\) in a permutation. By \LemmaRef{lem:incdec} it must contain an increasing
subsequence. If the region is empty then we have an occurrence of \(m_2\). If
there is only one point in the region we can choose this point to replace the
\(1\) in the mesh pattern to get the required shading. If there is more than one
point then choosing the two leftmost points gives us the following mesh pattern.
\begin{equation*}
    \pattern{3,4}{2,3,1,4}{0/0, 0/1, 0/2, 0/3, 0/4,
                            1/0, 1/1, 1/2, 1/3, 1/4,
                            2/0, 2/1,2/2, 2/4,
                            3/1, 3/2, 3/3, 3/4,
                            4/1, 4/2, 4/3}
\end{equation*}
Here the two highlighted points are the original two points. The other two
points are an occurrence of the pattern we \(m_2\), and hence the two patterns
are coincident. It is also possible to calculate this coincidence by an
extension of the Third Dominating rule, where we allow a sequence of point
addition operations, this is discussed further in the future work section.
Because of symmetries we have dealt with \(4\) out of the \(7\)
\(\coinc[comp]{321}\)-equivalence classes.

Now consider the patterns
\begin{equation*}
    m_1 = \pattern{}{1,2}{0/1,0/2,2/0,2/1} \text{ and } m_2 = \pattern{}{1,2}{0/1,0/2,1/1,2/0,2/1}
\end{equation*}
which are \(\coinc[comp]{321}\)-equivalent.

In order to prove this coincidence we will proceed by mathematical
induction on the number of points in the region corresponding to the middle box.
We call this number \(n\).
\begin{description}
    \item [Base Case \((n=0)\)] The base case holds since we can freely shade the
        box if it contains no points.
    \item [Inductive Hypothesis \((n=k)\)] Suppose that we can find an occurrence
        of the second pattern if we have an occurrence of the first with \(k\) points
        in the middle box.
    \item [Inductive Step \((n=k+1)\)] Suppose that we have \((k+1)\) points in
        the middle box. Choose the bottom most point in the middle box, giving the
        mesh pattern
     \begin{equation*}
        \begin{tikzpicture}[scale=0.5]
            \modpattern[5]{}{1,2,3}{0/1,0/2,0/3,1/1,2/1,3/0,3/1,3/2}
            \draw (2.5,0.5) node {\(X\)};
        \end{tikzpicture}
    \end{equation*}
        Now we need to consider the box labelled \(X\). If this box is empty
        then we have an occurrence of \(m_2\) and are done. If this box contains
        any points then we gain some extra shading on the mesh pattern as any
        points in the boxes \(\boks{1,2}\) and \(\boks{1,3}\) would create an
        occurrence of the dominating pattern \(\perm{3,2,1}\)
     \begin{equation*}
        \begin{tikzpicture}[scale=0.5]
            \modpattern[5]{2,3}{1,2,3}{0/1,0/2,0/3,1/1,1/2,1/3,2/1,3/0,3/1,3/2}
            \filldraw (2.5,0.5) circle (4 pt);
        \end{tikzpicture}
    \end{equation*}
        The two highlighted points form an occurrence of \(m_1\) with \(k\)
        points in the middle box, and thus by the Inductive Hypothesis we
        are done.
\end{description}
By induction we have that every occurrence of \(m_1\) leads to an occurrence of \(m_2\)
and by \NoteRef{not:downcmesh} every occurrence of \(m_2\) is an occurrence of
\(m_1\) so the two patterns are coincident.
This argument applies to another two pairs of classes. Therefore we have
explained all 7 of the coincidences in \(\av{\{\perm{3,2,1}, \mperm{1,2}{R}\}}\)
and there are 213 \(\coinc{321}\)-equivalence classes.

\section{Wilf Equivalences between equivalence classes}
Wilf-equivalence is an important aspect to study in the field of permutation
patterns. The original definition is as follows:

\begin{definition}
  Two patterns \(\pi\) and \(\sigma\) are said to be \emph{Wilf-equivalent}
  if for all \(k\ge0, \size{\av[k]{\pi}} = \size{\av[k]{\sigma}}\). Two sets of
  permutation patterns \(R\) and \(S\) are \emph{Wilf-equivalent} if for all
  \(k\ge0, \size{\av[k]{R}} = \size{\av[k]{S}}\).
\end{definition}

Coincident patterns are trivially Wilf-equivalent: if \(\av[k]{R} = \av[k]{S}\)
then trivially \(\size{\av[k]{R}} = \size{\av[k]{S}}\). Coincidence is
therefore a stronger equivalence condition than Wilf-equivalence.

When examining Wilf-equivalences we can use a number of symmetries to reduce the
amount of work required. It can be seen that the reverse, complement and inverse
operations (see \FigureRef{fig:symm}) preserve enumeration, and therefore
classes related by these symmetries are trivially Wilf-equivalent.
\begin{figure}[!htb]
\begin{align*}
    \rev{\textpattern{}{2,3,1}{}} &= \textpattern{}{1,3,2}{}\\
    \com{\textpattern{}{2,3,1}{}} &= \textpattern{}{2,1,3}{}\\
    \inv{\textpattern{}{2,3,1}{}} &= \textpattern{}{3,1,2}{}
\end{align*}
\caption{The operations reverse, complement and inverse for the pattern 231}
\label{fig:symm}
\end{figure}

Since we will consider Wilf-equivalences in a set \(\av{S}\) we
must only use symmetries that preserve the dominating pattern(s) in \(S\), if we
were to allow other symmetries, then the equivalences calculated in the
previous section do not necessarily hold.

In the remainder of the paper we will consider Wilf-equivalences of patterns
whilst avoiding the \emph{dominating pattern} \(\perm{2,3,1}\), if two patterns
\(p_1 \et p_2\) are Wilf-equivalent we will denote this \(p_1\weqv{231}p_2\),
similarly when computational methods indicate that two patterns are Wilf
equivalent under \(231\) we write \(p_1\weqv[comp]{231}p_2\). We will use
\(\mathcal{C}\) to denote \(\av{\perm{2,3,1}}\) and \(C(x)\) will be the usual
Catalan generating function satisfying \(C(x) = 1 + xC(x)^2\). The fact that
\(C(x)\) is the generating function for \(\mathcal{C}\) can be seen by
structural decomposition around the maximum, as
shown in \FigureRef{fig:decompmax}.

\begin{figure}[!ht]
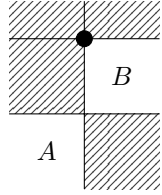

    \centering
    \decompmax{0/1,1/0}{A}{}{}{B}
    \caption{Structural decomposition of a non-empty avoider of 231}
    \label{fig:decompmax}
\end{figure}

The elements to the left of the maximum, \(A\), have the structure of a \(231\)
avoiding permutation, and the elements to the right of the maximum, \(B\), also have
the structure of a \(\perm{2,3,1}\) avoiding permutation. Furthermore, all the
elements in \(A\) lie below all of the elements in \(B\). We call \(A\) the
\emph{lower-left section} and \(B\) the \emph{upper-right section}.

We can also decompose a permutation avoiding \(\perm{2,3,1}\) around the
leftmost point, giving a similar figure.

\subsection{Wilf-classes with mesh patterns of length 1.}
\label{wilf1}
When considering the mesh patterns of length \(2\) it will be useful to know the
Wilf-equivalence classes of the mesh patterns of length \(1\) inside
\(\av{\perm{2,3,1}}\), this means that we are considering the
\(\weqv{231}\)-equivalence classes of mesh patterns with underlying classical
pattern \(1\).

The patterns in the following set are coincident,
\begin{equation*}
    \left\{
        \begin{array}{c}
        \textpattern{}{1}{},
        \textpattern{}{1}{0/0},
        \textpattern{}{1}{1/0},
        \textpattern{}{1}{0/0,1/0},
        \textpattern{}{1}{0/1},\\
        \textpattern{}{1}{0/0,0/1},
        \textpattern{}{1}{1/1},
        \textpattern{}{1}{1/0,1/1},
        \textpattern{}{1}{0/1,1/1}
    \end{array}
    \right\}
\end{equation*}
due to the fact that every permutation, except the empty
permutation, must contain an occurrence of all of these patterns.

The pattern \(\textpattern{}{1}{0/0,1/0,0/1,1/1}\) is in its own
\(\weqv{231}\)-equivalence class since the only permutation containing it is
the permutation \(\perm{1}\). The avoiders of this pattern therefore
have generating function \(\var(x) = C(x)-x\).

\nextvar
The pattern \(p = \textpattern{}{1}{0/1,1/0}\) is one of the
quadrant marked mesh patterns studied by \textcite{2012arXiv1201.6243K}.
Alternatively we can enumerate
avoiders of \(p\) by decomposing a non-empty avoider of \(p\) around the
maximum element
in order to give the following structural decomposition.

\begin{equation*}
    \scriptvar = \varepsilon \sqcup
\decompmax{0/1,1/0}{\scriptvar}{}{}{\mathcal{C}\setminus\varepsilon}
\end{equation*}
If the upper-right section was empty the maximum would create an
occurrence of the pattern, however no points in this section can create
an occurrence since the maximum lies in a region corresponding to the
shading in \(p\), so we can use any avoider of \(\perm{2,3,1}\). The lower-left
section however can create occurrences
of \(p\) and therefore must also avoid \(p\), as well as \(\perm{2,3,1}\).
This gives the generating function of avoiders to be the function \(\var(x)\)
satisfying
\begin{align*}
    \var(x) &= 1 + x\var(x)(C(x)-1) \\
    \shortintertext{Solving for \(\var\) gives}
    \var(x) &= \frac{1}{1-x(C(x)-1)}
\end{align*}

\noindent
Calculating coefficients given by this generating function gives the Fine
numbers.
\begin{equation*}
    1, 0, 1, 2, 6, 18, 57, 186, 622, 2120, 7338,\dotsc
\tag{\href{https://oeis.org/A000957}{OEIS: A000957}}
\end{equation*}

\nextvar[\varfine]
It can be shown by use of the Second Dominating Pattern Rule that the patterns
\textpattern{}{1}{0/0,1/1} and \(q_1 = \textpattern{}{1}{0/0,0/1,1/1}\) are
coincident under \(\perm{2,3,1}\). Consider the decomposition of a non-empty
avoider of \(q_1\) in \(\av{\perm{2,3,1}}\) around the maximum:
\begin{equation*}
    \scriptvar_1 =\varepsilon \sqcup
\decompmax{0/1,1/0}{\mathcal{C}\setminus\varepsilon}{}{}{\mathcal{C}}
\end{equation*}
This can be explained succinctly by the fact that a permutation contains
\(q_1\) if and only if it starts with its maximum, and by not allowing the
lower-left section of the
\(\perm{2,3,1}\) avoider to be empty we prevent an occurrence from ever
happening.

Consider \(q_2 = \textpattern{}{1}{0/1,1/0,1/1}\). Avoiding this pattern
means that a permutation does not end with its maximum. We can perform a
similar
decomposition as before to get
\begin{equation*}
    \mathcal{\var}_2 =\varepsilon \sqcup
\decompmax{0/1,1/0}{\mathcal{C}}{}{}{\mathcal{C}\setminus\varepsilon}
\end{equation*}
The pattern \(q_3 = \textpattern{}{1}{0/0,0/1,1/0}\) is the
reverse-complement-inverse of \(q_2\) and hence the avoiders of \(q_2\) and
\(q_3\) (\(\scriptvar_2\) and \(\scriptvar_3\)) are equinumerous, and so \(q_1
\weqv{231} q_2 \weqv{231}q_3\). All of these classes have the same generating
function, namely
\begin{equation}
    \var(x) = 1 + xC(x)(C(x)-1). \label{eqn:maxlgen}
\end{equation}
The coefficients of this generating function are
\begin{equation*}
    1, 0, 1, 3, 9, 28, 90, 297, 1001, 3432, 11934,\dotsc
\tag{\href{https://oeis.org/A000245}{OEIS: A000245} with offset \(1\)}
\end{equation*}

\nextvar[\varmaxl]
There is one pattern of length \(1\) still to consider. The pattern
\(r = \textpattern{}{1}{0/0,1/0,1/1}\) is avoided by all permutations
that do not end in their minimum. Any avoider of \(\perm{2,3,1}\) that
ends in its minimum must be a decreasing sequence.
Therefore this particular class has equation
\begin{equation*}
    \var(0) = 1, \var(x) = C(x)-1
\end{equation*}
Computing these values gives
\begin{equation*}
    	1, 0, 1, 4, 13, 41, 131, 428, 1429, 4861, 16795,\dotsc
\tag{\href{https://oeis.org/A141364}{OEIS: A141364}}
\end{equation*}

\section{Wilf-classes with patterns of length 2}
From \SectionRef{sec:coincs} we know there are \(95\) coincidence classes and
therefore at most that many Wilf-equivalence classes.

The only symmetry that we are able to consider is
\emph{reverse-complement-inverse} as this is the only symmetry that preserves
the \(\perm{2,3,1}\) pattern. Using this symmetry we can find 61 classes of
trivial Wilf-equivalence, these equivalences being explained by either the
patterns being coincident in \(\av{\perm{2,3,1}}\), or by one pattern being the
reverse-complement-inverse of some other pattern.

Computing avoiders up to length 10 gives 23 \(\weqv[comp]{231}\)-equivalence
classes, of which 13 are not equal to coincidence classes (after taking
symmetries). Therefore there seem to be Wilf-equivalences that are not explained
by coincidences or symmetry.

We will use two main methods of establishing these Wilf-equivalences: the
structural decomposition of avoiders, via generating functions; or the structure
of a general permutation containing the pattern, looking at a particular
occurrence of the pattern in a permutation avoiding \(\perm{2,3,1}\). Sometimes
it will be necessary to use both of these methods to consolidate a single
Wilf-class.

\subsection{}
\nextvar
The following patterns are in the same \(\weqv[comp]{231}\)-equivalence class
\begin{gather}
    m_1 = \pattern{}{1,2}{0/0,0/1,1/1,1/2,2/0,2/1},
    m_2 = \pattern{}{2,1}{0/0,0/1,1/1,1/2,2/0,2/1}, \label{eq:singleud}\\
    m_3 = \pattern{}{2,1}{0/1,0/2,1/0,1/1,1/2,2/1}, \text{ and }
    m_4 = \pattern{}{2,1}{0/1,0/2,1/0,1/1,1/2,2/0} \label{eq:other}
\end{gather}

First we prove that \(m_1 \weqv{231} m_2\) by considering the form of a general permutation containing
either of the two patterns. First looking at a general occurrence of \(m_1\) in a permutation
in \(\av{231}\)
\begin{equation*}
\begin{tikzpicture}[scale=1.0]
\modpattern[4]{}{1,2}{0/0,0/1,1/1,1/2,2/0,2/1}
\node at (0.5,2.5) {\Large \(A\)};
\node at (2.5,2.5) {\Large \(B\)};
\node at (1.5,0.5) {\Large \(C\)};
\end{tikzpicture}
\end{equation*}

If there is an occurrence of \(m_1\) there must be an occurrence of \(m_1\) that minimizes
the values of the points \(xy\) in the occurrence, first with respect to \(x\) then with respect to \(y\).
Now consider the top regions, labelled \(A \et B\), the subpermutation contained
in the union of these regions must avoid the permutation \(231\). Also, the
subpermutation in the region \(A\) must be a decreasing subsequence, otherwise
an occurrence of \(231\) will be created with either of the points in the
occurrence of \(m_1\). Now, consider the region labelled \(C\), since we
specified that we were focused on the lowest possible occurrence of \(m_1\) this
region cannot contain an occurrence of either \(m_1\) or \(231\). This is a full
structural decomposition of a container of \(m_1\) inside \(\av{231}\).

Now consider a general occurrence of \(m_2\) in a permutation
in \(\av{231}\)
\begin{equation*}
\begin{tikzpicture}[scale=1.0]
\modpattern[4]{}{2,1}{0/0,0/1,1/1,1/2,2/0,2/1}
\node at (0.5,2.5) {\Large \(A^\prime\)};
\node at (2.5,2.5) {\Large \(B^\prime\)};
\node at (1.5,0.5) {\Large \(C^\prime\)};
\end{tikzpicture}
\end{equation*}

Similarly to a container of \(m_1\) we will look at the lowest possible
occurrence of \(m_2\), and as before the regions \(A^\prime \et B^\prime\)
together contain an avoider of \(231\) and \(A^\prime\) contains a decreasing
subsequence. Since we are considering the lowest occurrence of \(m_2\) the
region \(C^\prime\) does not contain an occurrence of either \(m_2\) or \(231\).

Since all the regions in both of these cases contain the same parts,  the
classes defined by containment of \(m_1\) and \(m_2\) inside
\(\av{\perm{2,3,1}}\) are equinumerous and therefore so are their avoiders.

Consider the class, \(\scriptvar_3\), of permutations defined by avoiding
\(\perm{2,3,1}\) and \(m_3\). We can decompose a member of this class around the
maximum
\begin{equation*}
    \scriptvar_3 = \varepsilon \sqcup
    \decompmax{1/0,0/1}{\scriptvar_3}{}{}{\mathcal{\varmaxl}_1}
\end{equation*}
Only the first point in the top right region can create an occurrence of \(m_3\)
if and only if it is the element with largest value in this region, therefore
the partial permutation in this region must avoid starting with the maximum,
\ie, be in \(\mathcal{\varmaxl}_1 = \av{\textpattern{}{1}{0/0,0/1,1/1}}\)
described
in \SectionRef{wilf1}.

Looking at avoiders of \(\perm{2,3,1}\) and \(m_4\) we
can perform a similar decomposition around the maximum to get
\begin{equation*}
    \scriptvar_4 = \varepsilon \sqcup
    \decompmax{1/0,0/1}{\scriptvar_4}{}{}{\mathcal{\varmaxl}_3}
\end{equation*}
An occurrence of \(m_4\) can never occur entirely within the
top right region, since the maximum is to the left and above any point in this region. It could only occur between the maximum and the first point
in the region, if and only if this first point is the lowest valued element in
this region, so this top right region must contain a sub-permutation that does not
start with it's minimum, i.e., it is a member of \(\mathcal{\varmaxl}_3 = \av{\textpattern{}{1}{0/0,0/1,1/0}}\).
Since both \(\mathcal{\varmaxl}_1\) and \(\mathcal{\varmaxl}_3\) have the same
enumeration, \(\scriptvar_3\) and \(\scriptvar_4\) must also have
the same enumeration and are therefore \(m_3\weqv{231}m_4\).

Now we must consolidate these two subclasses. In order to do this we must
consider the decomposition around the leftmost point of a permutation in
\(\av{\{\perm{2,3,1},m_1\}}\). We have the following
\begin{equation*}
    \scriptvar_1 = \varepsilon \sqcup
    \decompleft{1/0,0/1}{\scriptvar_1}{}{}{\mathcal{\varmaxl}_3}
\end{equation*}
It is therefore clear that avoiders of \(m_1\) and avoiders of \(m_4\) have the
same enumeration, and therefore \(m_1 \weqv{231} m_2 \weqv{231}m_3\weqv{m_4}\)
with generating function satisfying
\begin{equation*}
    \var(x) = 1 + x\var(x)\varmaxl(x)
\end{equation*}
where \(\varmaxl(x)\) is the generating function given in \EquationRef{eqn:maxlgen}.
This can be enumerated to give the sequence
\begin{equation*}
    1, 1, 1, 2, 6, 19, 61, 200, 670, 2286, 7918,\dotsc \tag{\href{https://oeis.org/A035929}{OEIS: A035929} offset 1}
\end{equation*}

\subsection{}
\nextvar
The patterns \(m_1 \et m_2\) are in the same \(\weqv[comp]{231}\)-equivalence class
\begin{equation*}
    m_1 = \pattern{}{1,2}{1/0,1/1,1/2,2/0,2/1} \text{ and }
    m_2 = \pattern{}{2,1}{0/1,1/0,1/1,1/2,2/1}
\end{equation*}
Let \(\scriptvar_1\) be the set of avoiders of \(m_1\) in \(\av{231}\). By
structural decomposition around the leftmost point we have
\begin{equation*}
    \scriptvar_1 = \varepsilon \sqcup
    \decompleft{0/1,1/0}{\scriptvar_1}{}{}{\scriptvar^\prime_1}
\end{equation*}

\noindent
Here \(\scriptvar^\prime_1\) is a permutation avoiding \(\perm{2,3,1}, m_1\) and
\(\textpattern{}{1}{0/0,0/1,1/0}\). Now consider the decomposition of a
permutation in \(\scriptvar^\prime_1\). It can once again be decomposed around
the leftmost point
\begin{equation*}
    \scriptvar^\prime_1 = \varepsilon \sqcup
    \decompleft{0/1,1/0}{\scriptstyle \scriptvar_1\setminus\varepsilon}{}{}{\scriptvar^\prime_1}
\end{equation*}
This is a complete decomposition of avoiders of \(m_1\). Now we look at an
avoider of \(m_2\), decomposed around the leftmost point
\begin{equation*}
    \scriptvar_2 = \varepsilon \sqcup
    \decompleft{0/1,1/0}{\scriptvar^\prime_2}{}{}{\scriptvar_2}
\end{equation*}

Where \(\scriptvar^\prime_2\) is a permutation avoiding \(\perm{2,3,1}, m_2\)
and \(\textpattern{}{1}{0/0,0/1,1/1}\). Again we use the same method of
decomposition of a permutation in \(\scriptvar^\prime_2\)
\begin{equation*}
    \scriptvar^\prime_2 = \varepsilon \sqcup
    \decompleft{0/1,1/0}{\scriptvar^\prime_2}{}{}{\scriptstyle \scriptvar_2\setminus\varepsilon}
\end{equation*}
This gives us a generating function \(\var(x)\) satisfying
\begin{align}
    \var(x) &= 1 + x\var(x)\var^\prime(x) \label{eqn:Pgen}\\
    \var^\prime(x) &= 1 + x(\var(x)-1)\var^\prime(x)\label{eqn:PprimeGen}
\end{align}
Solving \EquationRef{eqn:PprimeGen} for \(\var^\prime(x)\) and substituting into
\EquationRef{eqn:Pgen} gives us that the generating function for
\(\var(x)\) satisfies
\begin{equation}
    \var(x) = x\var^2(x) - x(\var(x) - 1) + 1
\end{equation}
Evaluating \(\var(x)\) gives us the sequence
\begin{equation*}
    1, 1, 1, 2, 4, 9, 21, 51, 127, 323, 835,\dotsc \tag{\href{https://oeis.org/A001006}{OEIS: A001006 with offset 1}}
\end{equation*}
Which is an offset of the Motzkin numbers.

In order to establish the remainder of the Wilf-equivalences of the form
\(\av{\{\perm{2,3,1},p\}}\) where \(p\) is a mesh pattern of length \(2\) we can
use similar methods to allow us to consolidate experimental classes into actual
classes, these methods allow us to explain all 23 of the observed Wilf-classes
seen in experimentation.

\section{Conclusions and Future Work}
If we consider a similar approach to dominating patterns of length \(4\) and
mesh patterns of length \(2\), it can be seen that the number of cases required
to establish rules increases to a number that is infeasible to compute manually.
For an extension of the First Dominating rule alone, we would have to consider
placement of points in any pair of unshaded regions. The fact that the rules
established do not completely cover the coincidences with a dominating pattern
of length \(3\) shows that this is a difficult task.

It is interesting to consider the application of the Third Dominating rule, as
well as the simple extension of allowing a sequence of point insertions, to mesh
patterns without any dominating pattern in order to try to capture some of the
coincidences described in
\textcite{DBLP:journals/combinatorics/HilmarssonJSVU15} and
\textcite{DBLP:journals/corr/ClaessonTU14}.
 \begin{example}
   The coincidence of the patterns
   \begin{equation*}
     m_1 = \pattern{}{1,2}{0/1,0/2,1/1,1/2,2/0}, \text{ and } m_2 = \pattern{}{1,2}{0/2,1/0,1/1,2/0,2/1}
   \end{equation*}
	does not follow from the general methods presented by
	\textcite{DBLP:journals/corr/ClaessonTU14}, but is rather handled there as a
	special case. We can do it as follows:
   Consider a permutation containing \(m_1\),
   \begin{equation*}
     \begin{tikzpicture}[scale=0.5]
         \modpattern[5]{}{1,2}{0/1,0/2,1/1,1/2,2/0}
         \draw (1.5,0.5) node {\(Y\)};
         \draw (2.5,1.5) node {\(X\)};
     \end{tikzpicture}
   \end{equation*}
   If the regions corresponding to both \(X\) and \(Y\) are empty then we have
   an occurrence of \(m_2\).
   If the region corresponding to \(X\) is non-empty, we can then choose
   the lowest valued point in this region
   \begin{equation*}
     \begin{tikzpicture}[scale=0.5]
         \modpattern[5]{1,3}{1,3,2}{0/1,0/2,0/3,1/1,1/2,1/3,2/0,2/1,3/0,3/1}
         \draw (1.5,0.5) node {\(Y\)};
     \end{tikzpicture}
   \end{equation*}
   If the region corresponding to \(Y\) is empty then we have an occurrence of
   \(m_2\) with the indicated points.
   Now if the region corresponding to \(Y\) is non-empty, we can choose the
   rightmost point in this region.
   \begin{equation*}
     \begin{tikzpicture}[scale=0.5]
         \modpattern[5]{2,4}{2,1,4,3}{0/2,0/3,0/4,1/2,1/3,1/4,2/0,2/1,2/2,2/3,2/4,3/0,3/1,3/2,4/0,4/1,4/2}
     \end{tikzpicture}
   \end{equation*}
   And now the two indicated points form an occurrence of \(m_2\).
   We have therefore shown that any occurrence of \(m_1\) leads to an occurrence of
   \(m_2\) and we can easily show the converse by the same reasoning, so \(m_1\)
   and \(m_2\) are coincident.
   This is captured by an extension of the Third Dominating rule where we allow
   multiple steps of adding points before we check for subpattern containment.
 \end{example}

It would be interesting to consider a systematic explanation of
Wilf-equivalences among classes where \(\perm{3,2,1}\) is the dominating
pattern, possibly using the  construction presented in
\textcite[Sec.~11]{2015arXiv151203226B}, in order to directly reach enumeration
and hopefully establish some of the non-trivial Wilf-equivalences between
classes with different dominating patterns.

\printbibliography
\end{document}